\documentclass{article}
\usepackage{times}
\usepackage{amssymb, amsmath, amsthm}
\usepackage[margin=1.0in]{geometry}
\usepackage{hyperref, enumerate}
\usepackage[T1]{fontenc}

\newtheorem{theorem}{Theorem}[section]
\newtheorem{lemma}[theorem]{Lemma}
\newtheorem{proposition}[theorem]{Proposition}
\newtheorem{corollary}[theorem]{Corollary}
\theoremstyle{definition}
\newtheorem{definition}[theorem]{Definition}
\theoremstyle{remark}
\newtheorem{remark}[theorem]{Remark}

\newcommand{\Q}{\mathbb{Q}}
\newcommand{\Z}{\mathbb{Z}}
\newcommand{\C}{\mathbb{C}}
\newcommand{\N}{\mathbb{N}}
\newcommand{\BH}{\mathrm{BH}}

\newcommand{\rad}{\mathrm{rad}}
\newcommand{\z}{\zeta_{6}}

\title{On complete morphisms of Butson Hadamard matrices} 

\author{
Ronan Egan\footnotemark[1],~
Padraig \'{O} Cath\'{a}in\footnotemark[2]}
\date{
\today
}

\begin{document}

\maketitle

\begin{abstract}
A complete morphism of Butson classes replaces each entry of a matrix in $\BH(n,k)$ by an $m \times m$ block, uniformly, to produce a matrix in $\BH(mn,t)$. We show that every complete morphism is described by a pair $(U,Z)$, where $Z \in \BH(m,t)$ and $U$ is a unitary matrix annihilated by the $k^{\textrm{th}}$ cyclotomic polynomial. We deduce from a theorem of Lam and Leung that complete morphisms onto real Hadamard matrices exist precisely when $k$ is a power of $2$. The exponent-reducing morphisms of \"Osterg\aa rd--Paavola and \'O Cath\'ain--Swartz achieve the smallest block size the degree bound allows, and we show that there are no other complete morphisms for which $U$ is monomial. We construct new complete morphisms where $U$ is not monomial: a complete morphism $\BH(n,7) \rightarrow \BH(6n,6)$ is given, the first to eliminate a prime from the alphabet. 
\end{abstract}

\renewcommand{\thefootnote}{\fnsymbol{footnote}}
\footnotetext[1]{School of Mathematical Sciences, Dublin City University, Ireland. Email: \href{mailto:ronan.egan@dcu.ie}{ronan.egan@dcu.ie}}
\footnotetext[2]{\'{U}dar\'{a}s na Gaeltachta, Ireland. Email: \href{mailto:p.ocathain@gmail.com}{p.ocathain@gmail.com}}
\renewcommand{\thefootnote}{\roman{footnote}}

\section{Introduction}
A \textit{complex Hadamard matrix} is an $n\times n$ matrix $M$ with entries of modulus $1$ which satisfies $MM^{\ast} = nI_n$. Equivalently, while subject to the constraint on the modulus of entries: $M$ maximises the determinant; and the rows of $M$ are orthogonal. Let $k$ be a positive integer, and let $\zeta_{k} = e^{\frac{2\pi \sqrt{-1}}{k}}$. Denote the set of all $k^{\rm th}$ roots of unity by $\mu_{k}$. A Butson Hadamard (or Butson) matrix is a complex Hadamard matrix with entries in $\mu_k$, and the set of $n\times n$ such matrices is denoted $\BH(n,k)$. A central topic in the study of complex Hadamard matrices is the question of when $\BH(n,k)$ is non-empty: the \textit{Hadamard conjecture} asserts that $\BH(4n, 2)$ is non-empty for all $n\in \mathbb{N}$ and the \textit{complex Hadamard conjecture} asserts that $\BH(2n, 4)$ is non-empty for all $n \in \mathbb{N}$. 

Several constructions in the literature produce a matrix in $\BH(mn, t)$ from a matrix in $\BH(n,k)$ by replacing each entry with an $m \times m$ block depending only on the entry. The prototypical example is the Kronecker product of complex Hadamard matrices, which is again complex Hadamard by elementary linear algebra: if $M_j \in \BH(n_j, k_j)$ then $\bigotimes_{j \in J} M_j \in \BH(\prod_{j\in J} n_j, k)$, where $\textrm{lcm} \,( k_j \mid j \in J ) \mid k$; it is sometimes the case that $\zeta_k^i \bigotimes_{j \in J} M_j$ lies in a smaller field for suitably chosen scalar factor. Cohn showed that there exists a map from $\mu_4$ into $\BH(2,2)$, which may be applied entry-wise to construct a map $\BH(2n, 4) \rightarrow \BH(4n,2)$, \cite{Cohn1965} cf. \cite{Turyn}. The authors proposed the following: 

\begin{definition}
For arbitrary $n \in \mathbb{N}$ and fixed $k,m,t \in \mathbb{N}$, a \emph{complete morphism} $\BH(n,k) \rightarrow \BH(mn,t)$ is a function $\phi: \mu_{k} \rightarrow \mathrm{M}_{m}(\mu_{t})$ such that for every $H \in \BH(n,k)$, the blockwise image $H^{\phi} = \left( \phi(h_{ij}) \right)$ lies in $\BH(mn, t)$.
\end{definition}

A morphism is called \emph{partial} if it is not complete: an example was constructed by Compton, Craigen and de Launey with domain the \textit{unreal} matrices over sixth roots; if $H \in \BH(n, 6)$ has no entries $\pm1$ then there exists $\phi:\{ \zeta_6, \zeta_6^2, \zeta_6^4, \zeta_6^5\} \rightarrow \BH(4, 2)$ such that $H^\phi \in \BH(4n, 2)$, \cite{CCdeL}. Constructions of Gow map matrices in $\BH(n,p)$, where $p = 2^a+1$ is a Fermat prime, with no entry equal to $1$, into $\BH(2^{a}n, 4)$, \cite{GowEasy,GowHard, EO}.

The present authors developed a general framework of \textit{sound pairs} for such constructions, \cite{EO}. With Swartz, they constructed a complete morphism $\BH(n,2^s) \rightarrow \BH(2^{2^{s-1}-1}n,2)$ for each $s \geq 2$, \cite{SpectraPaper}. \"Osterg\aa rd and Paavola obtained a more general result, with  maps $\BH(n, pk) \rightarrow \BH(pn, k)$ for $p$ a prime with $p \mid k$, \cite{OP}, cf. \cite{OCS}. 

In all prior work, any morphism which eliminates a prime divisor from $k$ is incomplete, for the morphisms of Compton, Craigen and de Launey, and of Gow mentioned above which map into matrices over $\mu_{2^s}$ for some $s$, the domain restriction is essential. Our first result shows that this is forced, by way of the theorem of Lam and Leung \cite{LamLeung} on vanishing sums of roots of unity. We are led to the consideration of matrices with entries in $\mu_6$, where the Lam-Leung restriction does not hold. 

Section \ref{sec:structure} describes the structure of complete morphisms, and corollaries to that result limiting the existence of certain morphisms. In particular, this resolves the spectrum of existence for complete morphisms $\BH(n,k) \rightarrow \BH(mn,2)$ for all $k \in \mathbb{N}$. In Section \ref{sec:morphisms} we consider the complete morphisms with the smallest parameters not eliminated by the obstruction, and provide a new example of a complete morphism $\BH(n,7) \rightarrow \BH(6n,6)$. This is the first example of a complete morphism which eliminates a prime divisor of $k$ from the alphabet. 

\section{The structure of complete morphisms}\label{sec:structure}
Throughout this paper, $\Phi_{k}$ is the $k^{\textrm{th}}$ cyclotomic polynomial. We write $\rad(k)$ for the product of the distinct primes dividing $k$. We begin with some straightforward but powerful consequences of the definition of a complete morphism. 

\begin{lemma}\label{lem:relations}
Suppose that $\phi$ is a complete morphism $\BH(n,k) \rightarrow \BH(mn,t)$. 
For $\zeta^j \in \mu_{k}$ write $Z_{j} = \phi(\zeta^j)$. Let $p$ be a prime dividing $k$ 
and set $e = k/p$. Indices are read modulo $k$ throughout. Then for every $j \in \{0,1,\ldots,k-1\}$
\begin{enumerate}[(1)]
\item $Z_{j} \in \BH(m,t)$;
\item $\sum_{a=0}^{p-1} Z_{j +ea} = 0$;
\item $Z_{j+a}Z_{a}^{\ast}$ is independent of $a$.
\end{enumerate}
\end{lemma}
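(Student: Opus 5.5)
Each of the three relations will come from feeding $\phi$ a suitable small Butson matrix and reading off what the block-orthogonality of the image forces. The only thing we use is that $H^\phi\in\BH(mn,t)$ for every $H\in\BH(n,k)$. Here $n$ is arbitrary, so it suffices to exhibit specific test matrices $H$, possibly of different orders. (If one insists on a single fixed $n$, we instead take Kronecker products with a fixed matrix, or embed our small examples as blocks of a larger Butson matrix via $H\otimes F$ with $F$ a Fourier matrix, provided $\BH(n,k)$ is nonempty. We will treat $n$ as free, as the definition indicates.)

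For (1), take $H=(\zeta^j)\in\BH(1,k)$. Then $H^\phi=Z_j$ must lie in $\BH(m,t)$. This is immediate.

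For (2), take $H=F_p\otimes \mathrm{diag}$-twisted, concretely the matrix in $\BH(p,k)$ whose first row is $(1,\dots,1)$ and whose second row is $(\zeta^{j}, \zeta^{j+e},\dots,\zeta^{j+e(p-1)})$, extended to the Fourier matrix $(\zeta_p^{rs})$ with its second row multiplied by $\zeta^j$. Since $\zeta^e=\zeta_p$, this is a Butson matrix in $\BH(p,k)$. Block-row orthogonality of $H^\phi$ between block rows $1$ and $2$ gives
\[
\sum_{a=0}^{p-1} Z_{j+ea}Z_0^{\ast}=0 .
\]
By (1), $Z_0$ is invertible, with $Z_0Z_0^*=mI$. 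Multiplying on the right by $Z_0$ then yields $\sum_a Z_{j+ea}=0$.

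For (3), compare two matrices that differ in a single column. We use the $2\times 2$-type test with respect to block-row inner products. Let $H$ be any Butson matrix in $\BH(p,k)$ as above, with rows $r_1,r_2$, and let $H'$ be obtained by multiplying one column $c$ by $\zeta^{a}$. Then $H'$ is still Butson. Orthogonality of block rows $1,2$ holds in both $H^\phi$ and $H'^\phi$. Subtracting the two relations, all terms cancel except those from column $c$:
\[
Z_{x}Z_{y}^*=Z_{x+a}Z_{y+a}^*,
\]
where $(\zeta^y,\zeta^x)$ are the entries of column $c$ in rows $1,2$.

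In the matrix $\zeta^j$-twisted Fourier matrix above, column $0$ has entries $(1,\zeta^j)$. This gives $Z_jZ_0^*=Z_{j+a}Z_a^*$ for all $a$, which is exactly (3).

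The only care needed is verifying that the twisted Fourier matrices really lie in $\BH(p,k)$, which holds because $p\mid k$. That is the sole place the hypothesis is used, and no step is a genuine obstacle.
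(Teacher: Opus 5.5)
Your proof is correct and follows the paper's approach: you feed in Fourier-type test matrices, twist a single column by $\zeta^a$ so that all other block terms in the off-diagonal block cancel, and invert $Z_0$ using (1). The differences from the paper are cosmetic (you use $n=1$ for (1) and a row-twisted $F_p$ instead of $F_k$ for (3)); your parenthetical about a fixed $n$ is unnecessary, and would not work for every $n$, since the definition quantifies over all $n$, just as the paper freely uses $F_p$ and $F_k$.
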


\begin{proof}
Up to Hadamard equivalence, $H \in \BH(n,k)$ contains a constant row with all entries equal to $\zeta^j$. Since $H^\phi\in \BH(mn,t)$, clearly all entries of $Z_j$ are in $\mu_t$ and $n Z_jZ_j^\ast = mn I_m$, and hence $Z_j \in \BH(m,t)$.

Since $p \mid k$, the Fourier matrix $F_p = (\zeta_{p}^{ij})_{0 \leq i,j < p}$ belongs to $\BH(p, k)$. Orthogonality of the first two rows of $(\zeta_{k}^{j}F_{p})^{\phi}$ requires that $\bigl(\sum_{a=0}^{p-1} Z_{j+ea}\bigr) Z_{j}^{\ast} = 0_{m\times m}$. Since $Z_{j}$ is invertible by (1), the claim follows.

For (3), fix $a$ and let $F'$ be obtained from $F_{k} = (\zeta^{rc})_{0 \leq r,c < k}$ by multiplying the last column by $\zeta^{a}$; scaling a column by a root of unity preserves $\BH(k,k)$. Orthogonality of block rows $0$ and $r$ of $(F')^{\phi}$ gives
\[
\sum_{c=0}^{k-2} Z_{rc} Z_{0}^{\ast} \;+\; Z_{r(k-1)+a}Z_{a}^{\ast} \;=\; 0 .
\]
The first summand does not involve $a$, so $Z_{r(k-1)+a}Z_{a}^{\ast}$ is independent of $a$. As $r$ runs over $1, \ldots, k-1$ the residue $r(k-1) \equiv -r$ runs over all non-zero residues modulo $k$; writing $j$ for $-r$ gives the result.
\end{proof}

\begin{theorem}\label{thm:structure}
There is a bijection between complete morphisms $\BH(n,k) \rightarrow \BH(mn,t)$ and 
pairs $(U, Z)$ where $Z \in \BH(m,t)$ and $U$ is unitary with $\Phi_{k}(U) = 0$ such that $U^{j}Z$ has entries in $\mu_{t}$ for $0 \leq j < k$. 
\end{theorem}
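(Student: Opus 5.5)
The map in one direction is $\phi \mapsto (U,Z)$ with $Z = Z_0 = \phi(1)$ and $U = Z_1 Z_0^{\ast}/m$. I will show that this pair has the required properties, and that conversely every such pair gives a complete morphism via $\phi(\zeta^j) = U^j Z$. The two constructions are mutually inverse, which gives the bijection.

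\emph{Forward direction.} Let $\phi$ be complete with $Z_j = \phi(\zeta^j)$. By Lemma~\ref{lem:relations}(1), $Z = Z_0 \in \BH(m,t)$. Put $W = Z_1 Z_0^{\ast}$; then $W W^{\ast} = Z_1 Z_0^{\ast} Z_0 Z_1^{\ast} = m^2 I$, so $U = W/m$ is unitary. By Lemma~\ref{lem:relations}(3) with $j=1$, $Z_{a+1} Z_a^{\ast} = Z_1 Z_0^{\ast} = mU$ for every $a$. Multiplying on the right by $Z_a$ and using $Z_a^{\ast} Z_a = mI$ gives $Z_{a+1} = U Z_a$, so by induction $Z_j = U^j Z$. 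Periodicity $Z_k = Z_0$ forces $U^k = I$.

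To get $\Phi_k(U) = 0$, use Lemma~\ref{lem:relations}(2). For each prime $p \mid k$ with $e = k/p$, it gives $\bigl(\sum_{a=0}^{p-1} U^{ea}\bigr) U^j Z = 0$, and since $U^j Z$ is invertible, $\sum_{a} U^{ea} = 0$. So the minimal polynomial of $U$ divides $x^k - 1$ and also $(x^{k}-1)/(x^{e}-1)$ for every maximal proper divisor $e$ of $k$. Since $x^k - 1$ is squarefree over $\C$, this says every eigenvalue of $U$ is a $k$th root of unity that is not an $e$th root of unity for any such $e$, i.e.\ a primitive $k$th root. Because $U$ is diagonalizable (unitary), $\Phi_k(U) = 0$. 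The entry condition on $U^j Z$ holds because $Z_j = U^j Z \in \BH(m,t)$.

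\emph{Reverse direction.} Given $(U,Z)$, define $\phi(\zeta^j) = U^j Z$. This is well defined because $U^k = I$: $\Phi_k \mid x^k - 1$ and $U$ is diagonalizable. Each $U^j Z$ has entries in $\mu_t$ by hypothesis, and $(U^j Z)(U^j Z)^{\ast} = mI$. For $H \in \BH(n,k)$, block $(r,s)$ of $H^{\phi} (H^{\phi})^{\ast}$ is
\[
\sum_{c} U^{h_{rc}} Z Z^{\ast} U^{-h_{sc}} = m \sum_c U^{h_{rc} - h_{sc}},
\]
writing $h_{rc}$ for the exponent of the $(r,c)$ entry of $H$. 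Diagonalize $U$. On each eigenvector with eigenvalue $\lambda$, a primitive $k$th root of unity, this sum becomes $m\sum_c \sigma\bigl(\zeta^{h_{rc}-h_{sc}}\bigr)$, where $\sigma$ is the Galois automorphism of $\Q(\zeta)$ sending $\zeta \mapsto \lambda$. Orthogonality of rows $r \neq s$ of $H$ says $\sum_c \zeta^{h_{rc}-h_{sc}} = 0$, and applying $\sigma$ preserves this relation. Hence the off-diagonal blocks vanish, the diagonal blocks are $mnI$, and $H^{\phi} \in \BH(mn,t)$.

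\emph{Bijectivity.} Starting from $\phi$, we showed $\phi(\zeta^j) = U^j Z$, so the reverse construction returns $\phi$. Starting from $(U,Z)$, the forward construction gives $Z_0 = Z$ and $Z_1 Z_0^{\ast}/m = U Z Z^{\ast}/m = U$, so it returns $(U,Z)$.

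The main obstacle is the Galois step in the reverse direction. Orthogonality of rows of $H$ is a single relation among $k$th roots of unity, and it must transfer to every eigenvalue of $U$. This works only because every eigenvalue is a primitive $k$th root and therefore a Galois conjugate of $\zeta$, which is exactly why $\Phi_k(U)=0$ rather than merely $U^k = I$ is the right condition.
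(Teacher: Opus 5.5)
Your proof is correct and follows essentially the same route as the paper. The forward direction extracts $U = \frac{1}{m}Z_1Z_0^{\ast}$ and uses Lemma~\ref{lem:relations}(2),(3) as the paper does; the paper phrases this as $\zeta^j \mapsto \frac{1}{m}Z_jZ^{\ast}$ being a unitary representation. Your reverse-direction Galois argument, eigenvalue by eigenvalue, is an unpacked form of the paper's observation that $\Phi_k \mid P$ forces $P(U)=0$; you additionally check explicitly that the two constructions are mutually inverse, which the paper leaves to the subsequent remark.
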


\begin{proof} 
First assume that $\phi$ is a complete morphism and set $Z = Z_{0}$ and let $\rho(\zeta^{j}) = \frac{1}{m}Z_{j}Z^{\ast}$. Then $\rho(\zeta_{k}^{j})\rho(\zeta_{k}^{j})^{\ast} = I_{m}$, and by Lemma \ref{lem:relations}(3), $\rho(\zeta_{k}^{i}\zeta_{k}^{j}) = \rho(\zeta_{k}^{i})\rho(\zeta_{k}^{j})$, so $\zeta_{k}^{j} \mapsto \rho(\zeta_{k}^{j})$ is a homomorphism into the unitary group and $Z_{j} = \rho(\zeta_{k}^{j})Z$. Assign $U = \rho(\zeta_{k})$; then $U^{k} = I$ and Lemma \ref{lem:relations}(2) gives $\sum_{i<p} U^{ik/p} = 0_{m \times m}$ for every prime $p \mid k$, so every eigenvalue of $U$ is a primitive $k^{\textrm{th}}$ root of unity and, $U$ being unitary, $\Phi_{k}(U) = 0$. Finally $U^{j}Z = Z_{j}$ has all of its entries in $\mu_{t}$ for every $j$, by Lemma \ref{lem:relations}(1).

Conversely, given $(U,Z)$, let $H \in \BH(n,k)$; the entries of $H^{\phi}$ lie in $\mu_{t}$ because those of every $U^{j}Z$ do. Let $r, s$ be distinct rows of $H$ with $\sum_{j} r_{j}\overline{s_{j}} = 0$. Write $r_{j} = \zeta_{k}^{a_{j}}$ and $s_{j} = \zeta_{k}^{b_{j}}$, and set $e_{j} = a_{j} - b_{j} \bmod k$. The polynomial $P(x) = \sum_{j} x^{e_{j}} \in \Z[x]$ vanishes at $\zeta_{k}$, hence is divisible by $\Phi_{k}$, hence $P(U) = 0$. Since $ZZ^{\ast} = mI$ and $U^{k} = I$, the inner product of the corresponding block-rows of $H^{\phi}$ is $\sum_{j} U^{a_{j}}ZZ^{\ast}U^{-b_{j}} = m P(U) = 0$, while each diagonal block of the Gram matrix is $mnI_{m}$; so $H^{\phi} \in \BH(mn,t)$.
\end{proof} 

\begin{remark}
The bijection of Theorem \ref{thm:structure} is entirely explicit: given a morphism $\BH(n,k) \rightarrow \BH(mn,t)$, recover $Z = \phi(1)$ and then $U = \phi(\zeta_{k})Z^{\ast}$; this inverts $(U,Z) \mapsto \phi$, since $\phi(\zeta_{k}^{j}) = U^{j}Z$.
\end{remark}

It is natural to give a name to the pairs of matrices $(U,Z)$ satisfying the hypotheses of Theorem \ref{thm:structure}.

\begin{definition}\label{def:pair}
A pair $(U,Z)$ of $m \times m$ complex matrices is $(k,t)$-\emph{flat} (or just \emph{flat} when the parameters $(k,t)$ are assumed) if $U$ is unitary, $\Phi_{k}(U) = 0$, and every $U^{j}Z$ has all of its entries in $\mu_{t}$ for all $0 \leq j < k$. A flat pair $(U,Z)$ is a $(k,t)$-\emph{amiable pair} (or just amiable pair) if moreover $ZZ^{\ast} = mI_{m}$.
\end{definition}

\subsection{Integrality conditions}

\begin{proposition}\label{prop:integrality}
Let $(U,Z)$ be a $(k,t)$-amiable pair of $m \times m$ matrices. Then $mU$ has entries in $\Z[\zeta_{t}]$, and $d \mid m$ for $d = \varphi(\mathrm{lcm}(k,t))/\varphi(t)$.
\end{proposition}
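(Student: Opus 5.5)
The first claim follows directly from the amiable condition. Since $ZZ^{\ast} = mI_m$, we have $Z^{-1} = \frac{1}{m}Z^{\ast}$, so $U = (UZ)Z^{-1} = \frac{1}{m}(UZ)Z^{\ast}$. Hence $mU = (UZ)Z^{\ast}$. The entries of $UZ$ lie in $\mu_t$ by flatness (take $j=1$), and the entries of $Z^{\ast}$ are conjugates of elements of $\mu_t$, hence also in $\mu_t$. Each entry of $mU$ is therefore a sum of $m$ elements of $\mu_t$, and so lies in $\Z[\zeta_t]$. This holds for every power as well: $mU^j = (U^jZ)Z^{\ast}$ has entries in $\Z[\zeta_t]$.

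For the divisibility, work with the field $K = \Q(\zeta_t)$. The matrix $U$ has entries in $K$, since $mU$ does. Its characteristic polynomial $\chi_U(x)$ therefore lies in $K[x]$. Because $\Phi_k(U)=0$, $U$ is diagonalizable (being unitary) with every eigenvalue a primitive $k$-th root of unity. So $\chi_U$ is a product of $m$ linear factors $x-\zeta_k^{a}$ with $\gcd(a,k)=1$. Now factor $\Phi_k$ over $K$. Its roots $\zeta_k^a$ are permuted by $\mathrm{Gal}(\overline{\Q}/K)$, and each orbit is the root set of an irreducible factor of $\Phi_k$ over $K$. The degree of any such factor is $[K(\zeta_k):K] = [\Q(\zeta_{\mathrm{lcm}(k,t)}):\Q(\zeta_t)] = \varphi(\mathrm{lcm}(k,t))/\varphi(t) = d$, since $K(\zeta_k)=\Q(\zeta_{\mathrm{lcm}(k,t)})$. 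All irreducible factors have the same degree $d$ because the extension is Galois, so every orbit has size $d$.

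Since $\chi_U\in K[x]$ divides a power of $\Phi_k$, it factors over $K$ as a product of these irreducible factors, each of degree $d$. Therefore $d \mid \deg\chi_U = m$. Equivalently, one could argue with the minimal polynomial of $U$ over $K$ and the rational canonical form, but the characteristic-polynomial argument suffices.

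The only point that needs care is the claim that $\chi_U$ has coefficients in $K$ and not merely in $\overline{\Q}$. This is immediate from $U\in \mathrm{M}_m(K)$, which is exactly what the first claim provides; that step is the crux that links the two parts of the statement. The remaining detail is the equality of orbit sizes, which is standard for a Galois extension: $K(\zeta_k)/K$ is abelian, so its Galois group acts freely and transitively on each orbit of primitive $k$-th roots of unity, giving every orbit size exactly $d$.
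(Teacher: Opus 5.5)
Your proof is correct and follows the same route as the paper: $mU=(UZ)Z^{\ast}$ gives entries in $\Z[\zeta_t]$, so $\chi_U\in\Q(\zeta_t)[x]$ is a product of irreducible factors of $\Phi_k$ over $\Q(\zeta_t)$, each of degree $[\Q(\zeta_{\mathrm{lcm}(k,t)}):\Q(\zeta_t)]=d$, which forces $d\mid m$. You justify the common degree by noting that every root generates $K(\zeta_k)$, which is slightly more explicit than the paper's appeal to the free Galois action, but the argument is the same.
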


\begin{proof}
Since $ZZ^{\ast} = mI_{m}$ we have $mU = (UZ)Z^{\ast}$, and both factors are flat, so the entries of $mU$ lie in $\Z[\zeta_{t}]$. Hence the characteristic polynomial of $U$ lies in $\Q(\zeta_{t})[x]$, and as $\Phi_{k}(U) = 0$ it is a product of irreducible factors of $\Phi_{k}$ over $\Q(\zeta_{t})$. Those factors correspond to the orbits of $G = \mathrm{Gal}(\Q(\zeta_{t})(\zeta_{k})/\Q(\zeta_{t}))$ on the primitive $k^{\textrm{th}}$ roots of unity, and $G$ acts freely, so each has degree $d = |G| = \varphi(\mathrm{lcm}(k,t))/\varphi(t)$; the characteristic polynomial has degree $m$, so $d \mid m$.
\end{proof}

\subsection{Necessary conditions for existence of complete morphisms}

\begin{corollary}\label{thm:obstruction}
Suppose there exists a complete morphism $\BH(n,k) \rightarrow \BH(mn,t)$. Then every prime divisor of $k$ is a non-negative integer combination of the prime divisors of $t$.
\end{corollary}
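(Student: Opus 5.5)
Fix a prime $p \mid k$. We apply Lemma~\ref{lem:relations}(2) with $j=0$ and $e=k/p$, which gives $\sum_{a=0}^{p-1} Z_{ea} = 0$. Each $Z_{ea}$ lies in $\BH(m,t)$ by Lemma~\ref{lem:relations}(1). Reading off the $(1,1)$ entry of this matrix identity, we obtain a vanishing sum of exactly $p$ roots of unity, all lying in $\mu_t$. The theorem of Lam and Leung \cite{LamLeung} states that a vanishing sum of $N$ roots of unity in $\mu_t$ exists if and only if $N$ is a non-negative integer combination of the prime divisors of $t$. Applying it with $N=p$ gives the claim at once. Since $p$ was an arbitrary prime divisor of $k$, this proves the corollary.

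The only point needing care is that the sum has exactly $p$ terms, counted with multiplicity. The indices $ea$ for $0 \le a < p$ are distinct modulo $k$, so the matrices $Z_{ea}$ are $p$ (not necessarily distinct) members of $\BH(m,t)$. Their $(1,1)$ entries are therefore $p$ elements of $\mu_t$, possibly with repetition, summing to $0$. Lam--Leung is stated for sums with repetitions allowed, so it applies directly. We never need $p$ itself to be related to $t$; it is simply the length of the vanishing sum.

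Alternatively, one could route the argument through Theorem~\ref{thm:structure}. Write $Z_j = U^jZ$; then $\sum_a U^{ea} = 0$ because $\Phi_p(U^{e})=0$. Multiplying on the right by $Z$ recovers the same relation. The direct route via Lemma~\ref{lem:relations}(2) is shorter, so I would use that. I do not expect any real obstacle here: the substance lies entirely in the cited Lam--Leung theorem.
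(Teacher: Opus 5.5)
Your proposal is correct and matches the paper's proof: both read off one entry of the relation $\sum_{a=0}^{p-1} Z_{ea} = 0$ from Lemma~\ref{lem:relations}(2), obtaining a vanishing sum of exactly $p$ elements of $\mu_t$ (counted with multiplicity), and then conclude with the Lam--Leung theorem.
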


\begin{proof}
Let $p \mid k$ and set $e = k/p$. By Lemma \ref{lem:relations}(2), $\sum_{a=0}^{p-1} Z_{ea} = 0_{m \times m}$. The entries of each $Z_{ea}$ are elements of $\mu_{t}$; hence any component of the matrix equation yields a vanishing sum of exactly $p$ elements of $\mu_{t}$.

By a well-known theorem of Lam and Leung, if there exists a vanishing sum with elements in $\mu_t$ then the number of summands is in the set $\N p_{1} + \cdots + \N p_{r}$ where $p_{1}, \ldots, p_{r}$ are the primes dividing $t$, \cite{LamLeung}. 
\end{proof}

Combinations of a single prime are especially restrictive:

\begin{corollary}\label{cor:primepower}
Let $t = q^{b}$ be a prime power. A complete morphism $\BH(n,k) \rightarrow \BH(mn, q^{b})$ exists if and only if $k$ is a power of $q$. In particular, complete morphisms $\BH(n,k) \rightarrow \BH(mn,2)$ exist if and only if $k = 2^{\alpha}$.
\end{corollary}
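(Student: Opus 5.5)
The plan is to prove the two directions separately. Necessity follows directly from Corollary~\ref{thm:obstruction}. Sufficiency comes from a trivial embedding when the exponent already fits, and otherwise from iterating the exponent-reducing morphisms of \"Osterg\aa rd--Paavola.

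\emph{Necessity.} Suppose a complete morphism $\BH(n,k) \rightarrow \BH(mn,q^{b})$ exists, with $b \geq 1$. The only prime dividing $t = q^{b}$ is $q$. By Corollary~\ref{thm:obstruction}, every prime $p \mid k$ lies in $\N q$, so $q \mid p$, which forces $p = q$. Hence $k$ has no prime divisor other than $q$, and $k = q^{a}$ for some $a \geq 0$; the case $k=1$ is $q^{0}$.

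\emph{Sufficiency.} Let $k = q^{a}$.
\begin{itemize}
\item If $a \leq b$, then $\mu_{k} \subseteq \mu_{q^{b}}$. The map $\phi(\zeta) = \zeta$ with $m=1$ is a complete morphism, since $\BH(n,k) \subseteq \BH(n,q^{b})$. In the language of Theorem~\ref{thm:structure} this is the amiable pair $(U,Z) = (\zeta_{k}, 1)$ of $1\times 1$ matrices.
\item If $a > b$, take the \"Osterg\aa rd--Paavola morphism $\BH(n, pk') \rightarrow \BH(pn, k')$ for a prime $p \mid k'$ \cite{OP}, with $p = q$ and $k' = q^{c}$ for $c \geq b \geq 1$. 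This gives complete morphisms $\BH(n,q^{c+1}) \rightarrow \BH(qn, q^{c})$. Composing them for $c = a-1, a-2, \ldots, b$ yields $\BH(n,q^{a}) \rightarrow \BH(q^{a-b}n, q^{b})$.
\end{itemize}

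It remains to check that a composition of complete morphisms is complete, which I would verify directly. Suppose $\phi: \mu_{k} \rightarrow \mathrm{M}_{m}(\mu_{s})$ and $\psi: \mu_{s} \rightarrow \mathrm{M}_{m'}(\mu_{t})$ are complete. Define $\chi(x)$ as the blockwise image $\phi(x)^{\psi}$, an $mm' \times mm'$ matrix over $\mu_{t}$. For any $H \in \BH(n,k)$ we have $H^{\chi} = (H^{\phi})^{\psi}$ after regrouping blocks. Here $H^{\phi} \in \BH(mn,s)$ and $\psi$ is complete for every order, in particular for order $mn$, so $H^{\chi} \in \BH(mm'n, t)$. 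This is exactly where uniformity in $n$ in the definition is used.

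The "in particular" clause is the case $q = 2$, $b = 1$.

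The main point needing care is the sufficiency direction's reliance on \cite{OP} requiring $p \mid k'$. This holds because every intermediate alphabet $q^{c}$ has $c \geq b \geq 1$. If one preferred a self-contained argument, I would instead build an amiable pair for $\BH(n,q^{c+1}) \rightarrow \BH(qn,q^{c})$ via Theorem~\ref{thm:structure}. The natural choice is $Z = F_{q}$ and $U$ the $q\times q$ monomial matrix given by the cyclic shift twisted by $\zeta_{q^{c}}$ in one entry. This satisfies $U^{q} = \zeta_{q^{c}} I$, hence $\Phi_{q^{c+1}}(U) = 0$, and $U^{j}F_{q}$ is again a row-permuted, row-scaled Fourier matrix over $\mu_{q^{c}}$.
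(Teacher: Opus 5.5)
Your proof is correct and follows the paper's route. Necessity comes from Corollary \ref{thm:obstruction} (Lam--Leung); sufficiency comes from the inclusion $\mu_{q^{a}} \subseteq \mu_{q^{b}}$ when $a \leq b$, and from the \"Osterg\aa rd--Paavola/\'O Cath\'ain--Swartz morphisms when $a > b$. You helpfully make explicit two things the paper leaves implicit: the composition of the one-step maps $\BH(n,q^{c+1}) \rightarrow \BH(qn,q^{c})$, and why composing complete morphisms keeps them complete. The paper's proof also notes that $m = q^{a-b}$ is optimal by Proposition \ref{prop:integrality}, which the statement does not require.
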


\begin{proof}
Every non-negative combination of the single prime $q$ is a multiple of $q$, so Corollary \ref{thm:obstruction} forces $\rad(k) = q$. Conversely, for $k = q^{a}$ with $a \leq b$ the inclusion $\mu_{k} \subseteq \mu_{t}$ gives a morphism with $m = 1$, and for $a > b$ the \"Osterg\aa rd--Paavola
and \'{O} Cath\'{a}in--Swartz constructions \cite{OP, OCS} gives one with $m = q^{a-b}$, which is optimal: by Proposition \ref{prop:integrality}, $m$ is divisible by $d = \varphi(q^{a})/\varphi(q^{b}) = q^{a-b}$.
\end{proof}

For real targets this settles the question raised at the end of \cite{EO}: the complete morphisms $\BH(n,k) \rightarrow \BH(mn,2)$ are exactly those with $k = 2^{\alpha}$, realised with the minimal multiplier $m = 2^{\alpha - 1}$ by \cite{OP,OCS}, and every morphism from an alphabet containing an odd prime onto real Hadamard matrices is necessarily partial.

The complete morphisms constructed to date by the authors and Swartz and by \"Osterg\aa rd--Paavola correspond to pairs $(U, Z)$ where $U$ is monomial; as a result flatness of the powers $U^{j}Z$ is automatic there, \cite{OCS, OP}. 

\begin{proposition}\label{prop:monomial}
Fix $k$ and $t$. There is a $(k,t)$-amiable pair $(U,Z)$ with $U$ monomial if and only if $\rad(k) \mid t$.
\end{proposition}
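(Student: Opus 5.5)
The proof splits into the two directions. Necessity will use Lemma~\ref{lem:relations}(2) in its matrix form together with the permutation structure of a monomial matrix. Sufficiency will be an explicit construction with $U$ a twisted cyclic shift.

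\emph{Necessity.} Let $(U,Z)$ be amiable with $U$ monomial, say $U = DP_\sigma$ with $D$ diagonal and $P_\sigma$ a permutation matrix.
\begin{enumerate}[(i)]
\item The nonzero entries of $U$ lie in $\mu_t$. Indeed, $(UZ)_{ij} = d_i Z_{\sigma(i) j}$, and both $(UZ)_{ij}$ and $Z_{\sigma(i)j}$ lie in $\mu_t$. Consequently every power $U^{j}$ is monomial with nonzero entries in $\mu_t$.
\item Fix a prime $p \mid k$ and set $e = k/p$. The polynomial $\sum_{a<p} x^{ea}$ is divisible by $\Phi_k$, so $\Phi_k(U)=0$ gives $\sum_{a=0}^{p-1} U^{ea} = 0$. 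Also $U^k = I$, since $\Phi_k \mid x^k - 1$; hence $\sigma^{k} = \mathrm{id}$.
\item Fix a row $i$. The orbit of $i$ under $\sigma^{e}$ has length dividing $p$, so it is $1$ or $p$.
\begin{itemize}
\item If the length is $p$, the matrices $U^{ea}$ with $0 \le a < p$ have their single nonzero entry in row $i$ in $p$ distinct columns. Then row $i$ of the sum cannot vanish, which is impossible.
\item Hence $\sigma^e(i) = i$. Then $(U^{ea})_{ii} = c^{a}$, where $c = (U^e)_{ii} \in \mu_t$, and the vanishing of the $(i,i)$ entry reads $\sum_{a<p} c^{a} = 0$.
\end{itemize}
\item From $\sum_{a<p} c^a = 0$ we get $c \neq 1$ and $c^p = 1$, so $c$ is a primitive $p$-th root of unity lying in $\mu_t$. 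Therefore $p \mid t$, and so $\rad(k) \mid t$.
\end{enumerate}
This step needs some care but is not deep. Crucially, it avoids Lam--Leung, which alone would only give Corollary~\ref{thm:obstruction}.

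\emph{Sufficiency.} Assume $\rad(k) \mid t$.
\begin{enumerate}[(i)]
\item Put $g = \gcd(k,t)$, $\ell = k/g$ and $c = \zeta_k^{\ell}$. Then $c$ has order $g \mid t$, so $c \in \mu_t$.
\item Let $U$ be the $\ell \times \ell$ companion matrix of $x^{\ell} - c$: the cyclic shift with one entry equal to $c$. It is monomial and unitary, its nonzero entries lie in $\mu_t$, and it is normal, hence diagonalisable.
\item The eigenvalues of $U$ are $\zeta_k \zeta_\ell^{i} = \zeta_k^{1+gi}$. Every prime divisor of $k$ divides $t$ and hence divides $g$, so $\gcd(1+gi,k)=1$. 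Thus all eigenvalues are primitive $k$-th roots of unity, and diagonalisability gives $\Phi_k(U)=0$.
\item For $Z$ take the Kronecker product of one Fourier matrix $F_p$ for each prime factor $p$ of $\ell$, counted with multiplicity. Since $\ell \mid k$, each such $p$ divides $t$, so $F_p$ has entries in $\mu_p \subseteq \mu_t$. This gives $Z \in \BH(\ell,t)$.
\item Each $U^{j}Z$ is a monomial matrix with nonzero entries in $\mu_t$ times a matrix with entries in $\mu_t$, so it has all its entries in $\mu_t$.
\end{enumerate}
Hence $(U,Z)$ is a $(k,t)$-amiable pair with $U$ monomial. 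This is essentially the \"Osterg\aa rd--Paavola and \'O Cath\'ain--Swartz construction, and its size $\ell$ matches the bound of Proposition~\ref{prop:integrality} in the prime-power case.
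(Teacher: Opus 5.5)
Your proof is correct, and it takes a different route from the paper in both directions.

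For necessity, the paper argues spectrally and by contradiction. If $p^{a} \| k$ with $p \nmid t$, then $W = U^{k/p^{a}}$ is monomial over $\mu_{t}$ and every eigenvalue is a primitive $p^{a}$-th root of unity. A $c$-cycle of $W$ with phase product $\delta$ contributes every solution of $\omega^{c} = \delta$. Since $\delta \in \mu_{p^{a}} \cap \mu_{t} = \{1\}$, the eigenvalue $\omega = 1$ appears, a contradiction. You instead read $\sum_{a<p} U^{ea} = 0$ entrywise and argue from supports. A $p$-cycle of $\sigma^{e}$ would leave a row carrying $p$ unimodular entries in distinct columns, so $\sigma^{e}$ fixes every index, and each diagonal entry of $U^{e}$ is then a root of $\Phi_{p}$ lying in $\mu_{t}$. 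Your version is direct rather than by contradiction and needs no computation of the spectrum of a twisted cycle. As a by-product it shows that $U^{k/p}$ is diagonal with primitive $p$-th roots of unity on the diagonal for every prime $p \mid k$. Hence the permutation part of $U$ has order dividing $k/\rad(k)$. The paper's version is shorter and stays in the eigenvalue language used in Proposition~\ref{prop:integrality} and Lemma~\ref{lem:nondegenerate}.

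For sufficiency the paper only cites \cite{OCS}. Your twisted cyclic shift, with $Z$ a Kronecker product of Fourier matrices, makes the statement self-contained. You also undersell it. When $\rad(k) \mid t$ one has $\varphi(\mathrm{lcm}(k,t))/\varphi(t) = \mathrm{lcm}(k,t)/t = k/\gcd(k,t) = \ell$, so your block size attains the bound of Proposition~\ref{prop:integrality} in general, not only in the prime-power case.
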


\begin{proof}
If $\rad(k) \mid t$, the matrices of \cite{OCS} are monomial. Conversely, let $(U,Z)$ be amiable with $U$ monomial: its phases lie in $\mu_{t}$, being quotients of entries of $UZ$ and of $Z$. Suppose $p^{a} \| k$ with $p \nmid t$. Then $W = U^{k/p^{a}}$ is again monomial with entries in $\mu_{t}$, and every eigenvalue of $W$ is a primitive $p^{a^\textrm{th}}$ root of unity. The eigenvalues of $W$ arising from a $c$-cycle of its permutation part with phase product $\delta \in \mu_{t}$ form the full solution set of $\omega^{c} = \delta$. Each solution lies in $\mu_{p^{a}}$, so $\delta = \omega^{c} \in \mu_{p^{a}} \cap \mu_{t} = \{1\}$; but then $\omega = 1$ is among the solutions, and $1$ is not a primitive $p^{a^\textrm{th}}$ root of unity.
\end{proof}

Corollary \ref{cor:primepower} and Proposition \ref{prop:monomial} together establish the limits of all previous constructions of complete morphisms. We observe that these results generalise to partial morphisms: the domain of the Compton-Craigen-de Launey partial morphism is maximal, as adding another root of unity to the domain would give a vanishing sum of length $3$ in $\{ \pm 1\}$; and similarly for the unreal matrices constructed from matrices of Gow. In the preparation of this note, the authors became aware of a minor inaccuracy in previous work, which is recorded below for the sake of correcting the record; no results of the previous paper are affected.

\begin{remark}[Erratum to \cite{EO}]\label{rem:erratum}
The proof of Theorem 10 of \cite{EO}, which realises Gow's matrices $M'_{p}$ for a Fermat prime $p = 2^{a}+1$, states that the trace of $2^{-a}M'_{p}$ is $-p$. The correct value is $-(p-1) = -2^{a}$, and the conclusion of the theorem is unaffected. 
\end{remark}

\section{The existence of a complete morphism $\BH(n,7) \rightarrow \BH(6n,6)$}\label{sec:morphisms}

In this section we exhibit a $(7,6)$-amiable pair $(U,Z)$ with $m = 6$, which is the smallest block size Proposition \ref{prop:integrality} permits, since $d = \varphi(42)/\varphi(6) = 6$. The search that produced it enumerates flat orbits rather than matrices $U$, and is described in \S\ref{subsec:search}. 

\begin{theorem}\label{thm:main}
There is a complete morphism $\BH(n,7) \rightarrow \BH(6n,6)$. 
\end{theorem}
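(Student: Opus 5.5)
By Theorem \ref{thm:structure}, it suffices to exhibit a $(7,6)$-amiable pair $(U,Z)$ of $6\times 6$ matrices. That is, we need $Z\in\BH(6,6)$ and a unitary $U$ with $\Phi_7(U)=0$ such that every $U^jZ$, for $0\le j<7$, has entries in $\mu_6$. The plan is to build the pair from \emph{flat orbits} rather than from $U$ directly.

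By Proposition \ref{prop:integrality}, $6U=(UZ)Z^\ast$ has entries in $\Z[\zeta_6]$. Moreover $\Phi_7$ is irreducible over $\Q(\zeta_6)$ of degree $6$, so $U$ is diagonalisable with spectrum exactly the six primitive seventh roots of unity, each with multiplicity one. Write $\omega=\zeta_7$ and $U=\sum_{i=1}^{6}\omega^i P_i$, where the $P_i$ are rank-one orthogonal projections onto the eigenvectors $v_i$.

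Consider a column $z$ of $Z$ and its orbit $z, Uz,\dots,U^6z$. Flatness requires all of these to be vectors in $\mu_6^6$. In addition, Lemma \ref{lem:relations}(2) with $p=7$ forces $\sum_{j=0}^{6}U^jz=0$. Coordinatewise, each coordinate is then a vanishing sum of seven sixth roots of unity, for instance $1+\zeta_6^3$ plus a triple summing to zero plus a pair, which fits the Lam--Leung pattern $7=2+2+3$ (Corollary \ref{thm:obstruction}). So I would first enumerate \emph{flat orbits}: sequences $(x_0,\dots,x_6)$ of vectors in $\mu_6^6$ such that there is a unitary $U$ with $\Phi_7(U)=0$ and $Ux_j=x_{j+1}$ (indices mod 7).

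A convenient necessary condition is that the Gram data $\langle x_j,x_0\rangle$ depend only on the difference of indices, and that the resulting circulant Gram matrix be compatible with the spectrum $\{\omega,\dots,\omega^6\}$. Concretely, $g(j)=\langle U^jz,z\rangle=\sum_i \omega^{ij}|\langle z,v_i\rangle|^2$, so $g$ must be a nonnegative combination of nontrivial characters of $\Z/7$. Normalising $x_0=\mathbf 1$ up to equivalence cuts the search down considerably.

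Next I would choose six orbits, one per column of $Z$, that are simultaneously realisable by a single $U$ and whose starting vectors $z_1,\dots,z_6$ are pairwise orthogonal. Then I would define $U$ on the basis $z_1,\dots,z_6$ by $Uz_c=x^{(c)}_1$. Unitarity amounts to checking $\langle Uz_c,Uz_{c'}\rangle=6\delta_{cc'}$. To check $\Phi_7(U)=0$ it is enough that $U^7=I$ holds on each $z_c$ and that $\sum_j U^jz_c=0$. The latter gives $\sum_{j=0}^6U^j=0$, so $U$ has no eigenvalue $1$, and since the minimal polynomial then divides $(x^7-1)/(x-1)=\Phi_7$ we get $\Phi_7(U)=0$. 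The requirement $U^jz_c=x^{(c)}_j$ must then be verified for all $j$, not just $j=1$, which is the genuine flatness check.

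I expect the main obstacle to be exactly this compatibility step: finding six flat orbits whose consecutive terms are all related by one common linear map, equivalently whose cross-Gram data $\langle x^{(c)}_j,x^{(c')}_0\rangle$ also depend only on $j$. The first thing I would try is to impose symmetry. I would take $U$ commuting with a natural group acting on $\BH(6,6)$, for example using $Z$ the Fourier matrix $F_6$ or a Kronecker product $F_2\otimes F_3$. Taking $U$ in the commutant of a cyclic shift reduces $U$ to a circulant-type matrix $6U=\sum c_i S^i$ with $c_i\in\Z[\zeta_6]$. A computer search over these coefficients, testing flatness of the seven powers against the single matrix $Z$, should then produce the pair. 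The final verification is finite: write down $U$ and $Z$ explicitly, check $ZZ^\ast=6I$, $UU^\ast=I$ and $\Phi_7(U)=0$, and check that all seven products $U^jZ$ have sixth-root entries. Theorem \ref{thm:structure} then yields the complete morphism $\BH(n,7)\to\BH(6n,6)$.
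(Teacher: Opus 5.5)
Your reduction matches the paper's: by Theorem~\ref{thm:structure} it suffices to exhibit a $(7,6)$-amiable pair. But the statement is a pure existence claim, and the paper's proof consists of a certificate. It writes down $U_1$ (or $U_2$) and $Z=(\zeta_3^{z_{ij}})$, and checks unitarity, $\Phi_7(U_i)=0$, $Z\in\BH(6,6)$ and $U_i^jZ\in\BH(6,6)$ for $0\le j<7$. Your proposal ends with ``a computer search \dots\ should then produce the pair''. So no pair is exhibited, and nothing shows the search space is non-empty. This cannot be taken on faith: the analogous search at $(p,t)=(5,6)$ with $m=4$ finds no flat orbit at all. As written, this is a research plan, not a proof.

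Worse, the ansatz you would try first provably fails. Suppose $6U=\sum_i c_iS^i$ with $S$ the cyclic shift and $c_i\in\Z[\zeta_6]$. Then the eigenvalues of $U$ are $\frac16\sum_i c_i\zeta_6^{ik}\in\Q(\zeta_6)$. This contradicts your own correct observation that the spectrum of $U$ is the set of primitive seventh roots of unity, none of which lies in $\Q(\zeta_6)$.

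More generally, $U$ has entries in $\Q(\zeta_6)$ and minimal polynomial $\Phi_7$, which is irreducible over $\Q(\zeta_6)$. Hence the centraliser of $U$ in $\mathrm{M}_6(\Q(\zeta_6))$ is $\Q(\zeta_6)[U]\cong\Q(\zeta_{42})$, a field. A monomial matrix over $\mu_6$ in this centraliser would correspond to a root of unity in $\Q(\zeta_{42})$. Its order equals that of its eigenvalues, which divides $6c$ for some cycle length $c\le 6$, so it is prime to $7$. Therefore it lies in $\mu_6$ and the matrix is scalar. So no non-trivial monomial symmetry can commute with $U$; at best a symmetry normalises it ($PUP^{-1}=U^a$), as relates the paper's four classes.

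You also misplace the real difficulty. By Lemmas~\ref{lem:orbit} and~\ref{lem:nondegenerate}, when $m=p-1$ a single flat orbit $c_0=\mathbf{1},c_1,\dots,c_6$ already determines $U=M_2M_1^{-1}$. That $U$ is automatically unitary with $\Phi_7(U)=0$, so the ``compatibility of six orbits'' you anticipate never arises. The feasible search enumerates single orbits, keeps those with $6U$ integral, and then finds $Z$ as a $6$-clique of mutually orthogonal flat-orbit generators of that $U$. That search is what produces the certificate the proof needs.
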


The matrices displayed below prove the theorem, and the enumeration of \S\ref{subsec:search} shows that up to monomial equivalence exactly four matrices $U$ occur in a $(7,6)$-amiable pair at block size $m = 6$, falling into two families of two, the classes within each family interchanged by $U \mapsto U^{a}$ for $a$ a non-residue modulo $7$. A single $Z$ serves for both: put $Z = (\zeta_{3}^{z_{ij}})$, 
\[
\setlength{\arraycolsep}{2.5pt}
2U_{1} \;=\;
\begin{pmatrix}
 0 & 0 & 0 & 0 & 0 & 2 \\
 1 & \z^{2} & 1 & \z^{5} & 0 & 0 \\
 0 & 1 & \z^{2} & \z^{5} & 1 & 0 \\
 \z^{3} & \z^{3} & 0 & \z^{4} & \z & 0 \\
 \z^{4} & \z^{2} & \z^{2} & 0 & \z^{4} & 0 \\
 \z^{4} & 0 & 1 & \z & 1 & 0
\end{pmatrix},\;
3U_{2} \;=\;
\begin{pmatrix}
 1 & 1 & 1 & \z^{4} & 2\z & \z^{4} \\
 \z^{2} & \z^{2} & 2\z^{5} & 1 & 1 & 1 \\
 \z^{2} & \z^{2} & \z^{2} & 2\z^{3} & 1 & 1 \\
 1 & 2\z^{3} & 1 & \z^{4} & \z^{4} & \z^{4} \\
 2\z^{3} & 1 & 1 & \z^{4} & \z^{4} & \z^{4} \\
 \z^{2} & \z^{2} & \z^{2} & 1 & 1 & 2\z^{3}
\end{pmatrix},\;
(z_{ij}) \;=\;
\begin{pmatrix}
 0 & 0 & 0 & 0 & 0 & 0 \\
 0 & 0 & 1 & 1 & 2 & 2 \\
 0 & 1 & 0 & 2 & 1 & 2 \\
 0 & 2 & 1 & 2 & 0 & 1 \\
 0 & 2 & 2 & 1 & 1 & 0 \\
 0 & 1 & 2 & 0 & 2 & 1
\end{pmatrix}.
\]
Then $U_{1}$ and $U_{2}$ are unitary; $\Phi_{7}(U_{i}) = 0$; $Z \in \BH(6,6)$; and $U_{i}^{j}Z \in \BH(6,6)$ for $0 \leq j < 7$. By Theorem \ref{thm:structure} each assignment $\phi_{i}(\zeta_{7}^{j}) = U_{i}^{j}Z$ is a complete morphism $\BH(n,7) \rightarrow \BH(6n,6)$.

Let $(U,Z)$ be $(k,t)$-amiable and let $P$, $Q$ be monomial over $\mu_{t}$. Then $(PUP^{-1}, PZQ)$ is amiable too, since $P$ and $Q$ are unitary and $(PUP^{-1})^{j}PZQ = P(U^{j}Z)Q$. The right factor $Q$ acts on $Z$ without touching $U$, and it is this freedom that allows a single $Z$ to serve both pairs above.

\subsection{Flat orbits}

Let $p$ be prime and let $(U,Z)$ be a $(p,t)$-amiable pair with $p \nmid t$. Each column $c$ of $Z$ generates the sequence $c_{j} = U^{j}c$, $0 \leq j \leq p-1$; these are columns of the matrices $U^{j}Z$ and so lie in $\mu_{t}^{m}$. Call such a sequence a \emph{flat orbit} and $c$ a \emph{flat orbit generator}. We will show that flat orbits are characterised without reference to $U$.

\begin{lemma}\label{lem:orbit}
Let $c_{0}, \ldots, c_{p-1} \in \mu_{t}^{m}$ with $m \leq p-1$, and suppose $c_{0}, \ldots, c_{m-1}$ are linearly independent. Then there is a unitary $m \times m$ matrix $U$ with $U^{j}c_{0} = c_{j}$ for all $j$, $U^{p} = I$ and $\Phi_{p}(U) = 0$, if and only if
\begin{enumerate}[(1)]
\item $\langle c_{i}, c_{j}\rangle$ depends only on $j - i$ modulo $p$, and
\item $\sum_{j} c_{j} = 0$.
\end{enumerate}

\end{lemma}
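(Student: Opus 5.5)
The forward direction is a direct computation, and the converse will construct $U$ explicitly on the basis $c_{0},\ldots,c_{m-1}$. The hypothesis $m\leq p-1$ guarantees that $c_{m}$ exists among the given vectors, and linear independence makes $c_{0},\ldots,c_{m-1}$ a basis of $\C^{m}$.

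For necessity, suppose $U$ is unitary with $U^{j}c_{0}=c_{j}$ and $U^{p}=I$. Then
\[
\langle c_{i},c_{j}\rangle=\langle U^{i}c_{0},U^{j}c_{0}\rangle=\langle c_{0},U^{j-i}c_{0}\rangle .
\]
Since $U^{p}=I$, the right-hand side depends only on $j-i$ modulo $p$, which gives (1). For (2), note $\sum_{j}c_{j}=\Phi_{p}(U)c_{0}=0$, because $\Phi_{p}(x)=1+x+\cdots+x^{p-1}$.

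For sufficiency, define a linear map $U$ by $Uc_{j}=c_{j+1}$ for $0\leq j\leq m-1$. The proof then proceeds in three steps.

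\begin{enumerate}[(i)]
\item \emph{$U$ is unitary.} For $0\leq i,j<m$, condition (1) gives $\langle Uc_{i},Uc_{j}\rangle=\langle c_{i+1},c_{j+1}\rangle=\langle c_{i},c_{j}\rangle$. So $U$ preserves the Gram matrix of a basis. That Gram matrix is positive definite, so $U$ is an isometry, hence unitary. In particular $c_{1},\ldots,c_{m}$ is again a basis.

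\item \emph{$Uc_{j}=c_{j+1}$ for every $j$, indices modulo $p$.} This is the main step, and I would prove it by induction on $j$, with the base cases $j<m$ holding by definition. It suffices to compare inner products of $Uc_{j}$ and $c_{j+1}$ against the basis $c_{1},\ldots,c_{m}$. For $0\leq i<m$, unitarity of $U$ gives
\[
\langle c_{i+1},Uc_{j}\rangle=\langle Uc_{i},Uc_{j}\rangle=\langle c_{i},c_{j}\rangle,
\]
and (1) gives $\langle c_{i},c_{j}\rangle=\langle c_{i+1},c_{j+1}\rangle$. Hence $Uc_{j}=c_{j+1}$. This argument only uses (1) and the unitarity from (i), so the induction need not assume anything about earlier $c_{i}$ beyond the definitions. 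The wrap-around case $j=p-1$ gives $Uc_{p-1}=c_{0}$, precisely because (1) is read modulo $p$. Consequently $U^{j}c_{0}=c_{j}$ for all $j$, and $U^{p}c_{0}=c_{0}$.

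\item \emph{$\Phi_{p}(U)=0$ and $U^{p}=I$.} By (2) and step (ii), $\Phi_{p}(U)c_{0}=\sum_{j}c_{j}=0$. Applying $U^{i}$, which commutes with $\Phi_{p}(U)$, gives $\Phi_{p}(U)c_{i}=0$ for every $i$. Since $c_{0},\ldots,c_{m-1}$ is a basis, $\Phi_{p}(U)=0$. Finally $x^{p}-1=(x-1)\Phi_{p}(x)$, so $U^{p}=I$.
\end{enumerate}

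The only delicate point is step (ii): checking that the extension is consistent for all $p$ vectors, including the wrap-around, rather than just the first $m$. The basis-pairing argument there handles it using (1) alone.
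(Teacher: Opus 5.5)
Your proof is correct. It shares the paper's forward direction and its definition of $U$: the paper's $U = M_{2}M_{1}^{-1}$ is exactly your shift map on the basis $c_{0},\ldots,c_{m-1}$. It also shares the unitarity argument via equal Gram matrices, but the extension step is organised differently.

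The paper inducts on $l$ using the sliding window $W_{l} = [\,c_{l-m+1}\mid\cdots\mid c_{l}\,]$, pairing $Uc_{l}$ against the $m$ vectors preceding $c_{l+1}$. This needs $Uc_{i-1} = c_{i}$ for indices beyond the initial basis, so the inductive hypothesis is genuinely required, both for those relations and for the invertibility of $W_{l}$. You pair against the fixed basis $c_{1},\ldots,c_{m} = Uc_{0},\ldots,Uc_{m-1}$, which uses only the defining relations $Uc_{i} = c_{i+1}$ for $i<m$. So your step (ii) proves $Uc_{j} = c_{j+1}$ for every $j$ at once, and the induction you announce is superfluous, as you half-observe yourself.

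The other difference is the wrap-around. The paper obtains $Uc_{p-1} = c_{0}$ from (2) and then $U^{p} = I$ from the resulting $p$-cycle. You obtain it from (1) read modulo $p$, use (2) only to show $\Phi_{p}(U)c_{0} = 0$, and recover $U^{p} = I$ from $x^{p}-1 = (x-1)\Phi_{p}(x)$.

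Your route is shorter. It also shows that the circulant form of (1) alone already makes $U$ cycle the $c_{j}$ with $U^{p} = I$, with (2) needed only to force the eigenvalues to be primitive. The paper's route never reduces an index modulo $p$ when invoking (1). It therefore proves the converse under the weaker Toeplitz condition on $c_{0},\ldots,c_{p-1}$ together with (2).
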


\begin{proof}
Suppose the matrix $U$ with the stated properties exists. Then $\langle c_{i}, c_{j}\rangle = \langle U^{i}c, U^{j}c\rangle = \langle c, U^{j-i}c\rangle$ as $U$ is unitary, implying part (1). That $\sum_{j} U^{j} = 0$ follows because $\Phi_{p}(x) = \sum_{j<p} x^{j}$ for $p$ prime and by assumption, $\Phi_{p}(U) = 0$. Hence $\sum_{j} c_{j} = \bigl(\sum_{j}U^{j}\bigr)c_{0} = 0$, implying part (2).

Conversely, assume claims (1) and (2) hold and write $\gamma_{r} = \langle c_{0}, c_{r}\rangle$, so that by (1) any $m$ consecutive $c_{j}$ have Gram matrix $\Gamma = (\gamma_{j-i})_{0 \leq i,j < m}$. Let $M_{1}$ and $M_{2}$ be the matrices with columns $(c_{0}, \ldots, c_{m-1})$ and $(c_{1}, \ldots, c_{m})$; both have Gram matrix $\Gamma$, and $M_{1}$ is invertible by hypothesis, so $\Gamma$ is too. Let $U = M_{2}M_{1}^{-1}$, the unique linear map with $Uc_{j} = c_{j+1}$ for $0 \leq j < m$. Then
\[
M_{1}^{\ast}U^{\ast}UM_{1} \;=\; (UM_{1})^{\ast}(UM_{1}) \;=\; M_{2}^{\ast}M_{2} \;=\; \Gamma \;=\; M_{1}^{\ast}M_{1},
\]
so $U^{\ast}U = I$ and $U$ is unitary.

We claim that $Uc_{l} = c_{l+1}$ for every $l < p-1$. Write $W_{l} = [\,c_{l-m+1} \mid \cdots \mid c_{l}\,]$, so that $W_{m-1} = M_{1}$ and $W_{m} = M_{2}$, and induct on $l \geq m$. If $Uc_{j} = c_{j+1}$ for all $j < l$ then $UW_{l-1} = W_{l}$, whence $W_{l}^{\ast}W_{l} = W_{l-1}^{\ast}W_{l-1} = \Gamma$ and $W_{l}$ is invertible. For $l - m < i \leq l$,
\[
\langle c_{i}, Uc_{l}\rangle \;=\; \langle Uc_{i-1}, Uc_{l}\rangle \;=\; \langle c_{i-1}, c_{l}\rangle \;=\; \gamma_{l-i+1} \;=\; \langle c_{i}, c_{l+1}\rangle ,
\]
that is $W_{l}^{\ast}(Uc_{l}) = W_{l}^{\ast}c_{l+1}$, and $Uc_{l} = c_{l+1}$ follows.

By claim (2),
\[
Uc_{p-1} \;=\; U\Bigl(-\!\!\sum_{j<p-1}\! c_{j}\Bigr) \;=\; -\!\!\sum_{j<p-1}\! c_{j+1} \;=\; -\!\!\sum_{1 \leq j \leq p-1}\! c_{j} \;=\; c_{0}, \]
so $U$ permutes the $c_{j}$ in a $p$-cycle and $U^{p} = I$. Finally $\bigl(\sum_{j}U^{j}\bigr)c_{0} = \sum_{j} c_{j} = 0$; the kernel of $\sum_{j}U^{j}$ is $U$-invariant and contains $c_{0}$, hence contains $\mathrm{span}\{U^{j}c_{0}\} = \C^{m}$. So $\sum_{j}U^{j} = 0$, that is $\Phi_{p}(U) = 0$.
\end{proof}

\begin{remark}
The matrix $U$ of Lemma \ref{lem:orbit} is unique and determined explicitly from $c_{0},\ldots,c_{m}$ by
\begin{equation}\label{eq:solve}
U \;=\; M_{2}M_{1}^{-1},
\end{equation}
where $M_{1}$ is invertible by the assumption of linear independence of $\{c_{0},\ldots,c_{m-1}\}$.
\end{remark}

\begin{lemma}\label{lem:nondegenerate}
Let $(U,Z)$ be a $(p,t)$-amiable pair where $p \nmid t$ with $m = p-1$, and let $c$ be any column of $Z$. Then $\{c, Uc, \ldots, U^{p-2}c\}$ is linearly independent.
\end{lemma}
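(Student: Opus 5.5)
The plan is to show that the cyclic subspace $V = \mathrm{span}\{U^{j}c : j \geq 0\}$ is all of $\C^{m}$. Its dimension equals the degree of the minimal polynomial of $U$ restricted to $V$ (relative to the vector $c$). This polynomial divides $\Phi_{p}$, because $\Phi_{p}(U) = 0$. If we show it must be all of $\Phi_{p}$, which has degree $p-1 = m$, then $c, Uc, \ldots, U^{p-2}c$ span an $m$-dimensional space and are therefore linearly independent.

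The key input is rationality over $\Q(\zeta_{t})$. Let $\mu_{c}(x)$ be the monic polynomial of least degree with $\mu_{c}(U)c = 0$; it is the minimal polynomial of $U|_{V}$ and divides $\Phi_{p}$. By Proposition \ref{prop:integrality}, $mU$ has entries in $\Z[\zeta_{t}]$, and $c$ has entries in $\mu_{t} \subset \Q(\zeta_{t})$. Hence the vectors $U^{j}c$ all lie in $\Q(\zeta_{t})^{m}$. The least linear dependence among $c, Uc, U^{2}c, \ldots$ can be found by linear algebra over the field $\Q(\zeta_{t})$, and linear dependence of vectors over a field is unchanged by field extension. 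So $\mu_{c}$ has coefficients in $\Q(\zeta_{t})$.

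Since $p \nmid t$, we have $\varphi(pt)/\varphi(t) = p-1$. Equivalently, $\Q(\zeta_{t})$ and $\Q(\zeta_{p})$ are linearly disjoint, so $\Phi_{p}$ remains irreducible over $\Q(\zeta_{t})$; this is the same Galois computation as in the proof of Proposition \ref{prop:integrality}, with $d = p-1$. Thus $\mu_{c}$ is a monic divisor of $\Phi_{p}$ over $\Q(\zeta_{t})$, so $\mu_{c} = 1$ or $\mu_{c} = \Phi_{p}$. The case $\mu_{c} = 1$ would force $c = 0$, which is impossible because $c$ has unimodular entries (it is a column of $Z \in \BH(m,t)$). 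Hence $\deg \mu_{c} = p-1 = m$, so $c, Uc, \ldots, U^{p-2}c$ are linearly independent.

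The only delicate step is justifying that $\mu_{c}$ is defined over $\Q(\zeta_{t})$. This needs $U$ itself, not just $mU$, to have entries in $\Q(\zeta_{t})$; that holds because $m \neq 0$. It also uses the standard fact that the first linear dependence among the Krylov vectors is detected over any field containing their coordinates. The irreducibility of $\Phi_{p}$ over $\Q(\zeta_{t})$ when $p \nmid t$ is classical.
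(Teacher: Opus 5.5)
Your proof is correct, but it takes a different route from the paper's, although the two are closely related.

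\textbf{The paper's route.} With $K = \Q(\zeta_{t})$, the paper first shows that the characteristic polynomial of $U$ is $\Phi_{p}$: its roots form a union of $\mathrm{Gal}(K(\zeta_{p})/K)$-orbits, and $m = p-1$. It then writes $c = \sum_{\lambda} x_{\lambda}$ with components in the one-dimensional eigenspaces. Applying $\sigma$ entrywise gives $x_{\sigma(\lambda)} = \sigma(x_{\lambda})$, so the components vanish all together or not at all.

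\textbf{Your route.} You stay on the polynomial side. The $U$-annihilator $\mu_{c}$ lies in $K[x]$ because $U$ and $c$ are $K$-rational, and it divides $\Phi_{p}$. Since $\Phi_{p}$ is irreducible over $K$, $c \neq 0$ forces $\mu_{c} = \Phi_{p}$.

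\textbf{Comparison.} Both arguments rest on the same two facts: $K$-rationality of $U$ and $c$, and $[K(\zeta_{p}):K] = p-1$. Your version never touches eigenvectors. So you need not check that the spectrum is simple or that $U$ is diagonalisable. Nor must you justify that the eigencomponents lie in $K(\zeta_{p})^{m}$ and that the Galois action is compatible with the decomposition.

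Your argument also uses $m = p-1$ only in its last line. It therefore proves more: $c, Uc, \ldots, U^{p-2}c$ are linearly independent for every non-zero $c \in \Q(\zeta_{t})^{m}$ at any block size $m$.

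In exchange, the paper's version displays extra structure: the eigencomponents of $c$ are Galois conjugates of one another, and $U$ has characteristic polynomial $\Phi_{p}$ with simple spectrum. At $m = p-1$ that last fact also follows from your argument. There the minimal polynomial of $U$ is divisible by $\mu_{c} = \Phi_{p}$ and divides $\Phi_{p}$, and its degree equals $m$.
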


\begin{proof}
Put $K = \Q(\zeta_{t})$ and $L = K(\zeta_{p})$, and let $G = \mathrm{Gal}(L/K)$, which is cyclic and acts simply transitively on the primitive $p^{\textrm{th}}$ roots of unity, since $[L:K] = \varphi(pt)/\varphi(t) = p-1$ for $p \nmid t$. The entries of $U$ lie in $K$, because $mU = (UZ)Z^{\ast}$ has entries in $\Z[\zeta_{t}]$, and $c \in \mu_{t}^{m} \subseteq K^{m}$.

The characteristic polynomial of $U$ lies in $K[x]$, and its roots are primitive $p^{\textrm{th}}$ roots of unity because $\Phi_{p}(U) = 0$. Its multiset of roots is therefore stable under $G$, hence a union of $G$-orbits; each orbit is the full set of $p-1$ primitive $p^{\textrm{th}}$ roots, and the degree is $m = p-1$, so each occurs exactly once. Choose $x_{\lambda}$ from each of the $p-1$ distinct $1$-dimensional $\lambda$-eigenspaces $E_{\lambda}$ so that $c = \sum_{\lambda}x_{\lambda}$.

For $\sigma \in G$, applying $\sigma$ entrywise to $(U - \lambda)x_{\lambda} = 0$ and using $\sigma(U) = U$ gives $\sigma(x_{\lambda}) \in E_{\sigma(\lambda)}$; since $\sigma(c) = c$, uniqueness of the decomposition yields $x_{\sigma(\lambda)} = \sigma(x_{\lambda})$. Hence the $x_{\lambda}$ vanish either all together or not at all, and $c \neq 0$ forces all of them to be non-zero. So the cyclic subspace generated by $c$ is all of $\C^{m}$, and its spanning set $\{c, Uc, \ldots, U^{m-1}c\}$ is a basis.
\end{proof}

\subsection{The search for a complete morphism}\label{subsec:search}

The classification is by exhaustive enumeration of flat orbits, in two stages. 

\emph{Stage 1: enumerating flat orbits.} Fix $p = 7$, $t = 6$, $m = 6$. An amiable pair $(U,Z)$ may be replaced by $(PUP^{-1}, PZ)$ for any monomial $P$ over $\mu_{6}$, so a chosen column orbit may be normalised: taking $P = \mathrm{diag}(\overline{c_{0}})$ makes $c_{0} = \mathbf{1}$, the column of all ones, and the residual freedom is the symmetric group permuting the six coordinates, which fixes $c_{0}$. This allows us to restrict to columns $c_{1}$ with non-decreasing exponents in the entries, hence there are $\binom{11}{5} = 462$ candidates for $c_{1}$. Every amiable pair is therefore reached: normalising the orbit of the first column of $Z$ in this way gives a sequence satisfying Lemma \ref{lem:orbit}(1) and (2), so it is enumerated, and its first six terms are independent by Lemma \ref{lem:nondegenerate}, so \eqref{eq:solve} returns the conjugated $U$. 

Lemma \ref{lem:orbit}(1) requires that $\langle c_i, c_j \rangle = \gamma_{j-i}$ depends only on $j-i$; hence each additional column added to a flat orbit imposes a linear constraint on subsequent columns. Since $\gamma_{-r} = \overline{\gamma_{r}}$, all inner products are determined once $c_{1}, c_{2}, c_{3}$ are chosen; the last column is not searched, being forced as $c_{6} = -\sum_{j<6}c_{j}$ by Lemma \ref{lem:orbit}(2) and retained only if it is flat. The enumeration yields $30{,}728$ flat orbits in seconds. 

Each flat orbit determines $U$ uniquely via equation \eqref{eq:solve}. The flat orbit is retained if and only if $6U \in \mathrm{M}_{6}(\Z[\z])$, which is necessary by Proposition \ref{prop:integrality}. Of these, $3{,}560$ yield an integral $6U$. 

\emph{Stage 2: constructing $Z$ from $U$.} Whether a column generates a flat orbit of $U$ depends on that column alone, so the flat orbit generators of a fixed $U$ are found by one sweep of the $6^{6}$ elements of $\mu_{6}^{6}$. A matrix $Z$ completing $U$ to an amiable pair is exactly a set of six pairwise orthogonal such columns, each of norm $\sqrt{6}$; so Stage 2 is a search for $6$-cliques in the orthogonality graph on the flat orbit generators of $U$. It is run for every $U$ retained at Stage 1, and the resulting amiable pairs are reduced by monomial equivalence, leaving four classes.

\begin{remark}[Cost of the search]\label{rem:cost}
A complete morphism $\BH(n,k) \rightarrow \BH(mn,t)$ exists for some $m$ if and only if, for each prime $p \mid k$ with $p \nmid t$, a complete morphism $\BH(n,p) \rightarrow \BH(m_{p}n,t)$ exists for some $m_{p}$. By Proposition~\ref{prop:integrality}, $(p-1) \mid m_{p}$; the search treats $m_{p} = p-1$, the only case in which a single flat orbit determines $U$.

In Stage~1 each partial orbit $(c_{1},\ldots,c_{j-1})$ is expanded by scanning the $v \in \mu_{t}^{p-1}$ with $\langle c_{j-1}, v\rangle = \gamma_{1}$, each tested against at most $p$ inner products. There are at most $t^{p-1}$ such $v$ and, since $\langle \mathbf{1}, v\rangle$ takes at most $\binom{p+t-2}{p-1}$ values, at least $t^{p-1}/\binom{p+t-2}{p-1}$ for some choice of $c_{1}$. Up to factors polynomial in $p$, Stage~1 therefore costs between $t^{2(p-1)}$ (expanding depth~$2$) and $t^{(p-1)(p-3)}$ (the full tree) operations. Stage~2 multiplies each vector of $\mu_{t}^{p-1}$ by $U$ a total of $p-1$ times, at a cost of $\mathcal{O}(p^{3}t^{p-1})$ operations per $U$.

We ran the search in a NumPy implementation on a laptop; the table gives the number of nodes in the Stage~1 tree and of candidates scanned. Both are exact for $(5,6)$ and $(7,6)$. Elsewhere they are estimated by Knuth's random-probe method~\cite{Knuth75}, which reproduces both counts at $(7,6)$ to within one standard error. Times are projected at about $3 \times 10^{7}$ candidates per second, which is the rate measured at $(7,6)$.
\begin{center}
\begin{tabular}{ccrrr}
$(p,t)$ & $m$ & nodes & candidates & Stage~1 time \\ \hline
$(5,6)$ & $4$ & $351$ & $1.5 \times 10^{4}$ & $0.2$\,s \\
$(7,6)$ & $6$ & $1.1 \times 10^{6}$ & $1.3 \times 10^{9}$ & $38$\,s \\
$(7,12)$ & $6$ & $6.3 \times 10^{7}$ & $5.7 \times 10^{11}$ & $5$\,h \\
$(11,6)$ & $10$ & $5 \times 10^{15}$ & $> 1.3 \times 10^{15}$ & $> 1$\,year
\end{tabular}
\end{center}
At $(5,6)$ there is no flat orbit at all, so no complete morphism $\BH(n,5) \rightarrow \BH(4n,6)$ exists. Stage~2 took about $30$\,ms per $U$ at $(7,6)$, under two minutes for all $3{,}560$ matrices.  and its sweep of $6^{10}$ vectors at $(11,6)$ would take minutes per $U$; asymptotically, Stage~1 dominates the running time.
\end{remark}

\section*{Statement of AI Use}

The authors used Anthropic's \textit{Claude Fable} and \textit{Claude Opus 3.5} to generate Python scripts to search for morphisms. Initial versions of some proofs were also generated with Claude. All proofs were rederived and rewritten by the authors; and computational results were checked and verified in \textsc{MAGMA}.  

\bibliographystyle{abbrv}
\flushleft{
\bibliography{MyBiblio}
}

\end{document}